\DeclareMathOperator{\mean}{mean}
\DeclareMathOperator{\summ}{sum}
\DeclareMathOperator{\minimize}{minimize}
\newtheorem{lemma}{Lemma}
\newcommand{\rom}[1]{\uppercase\expandafter{\romannumeral #1\relax}}
\title{Gradient Projection for Solving Quadratic Programs with Standard Simplex Constraints}
\author{Youwei Liang\footnote{College of Mathematics and Informatics, South China Agricultural University, Guangzhou 510642, China. {\sl Email}: liangyouwei1@gmail.com}}
\date{}
\begin{document}
\maketitle
\begin{abstract}
An important method to optimize a function on standard simplex is the active set algorithm, which requires the gradient of the function to be projected onto a hyperplane, with sign constraints on the variables that lie in the boundary of the simplex. We propose a new algorithm to efficiently project the gradient for this purpose. Furthermore, we apply the proposed gradient projection method to quadratic programs (QP) with standard simplex constraints, where gradient projection is used to explore the feasible region and, when we believe the optimal active set is identified, we switch to constrained conjugate gradient to accelerate convergence. Specifically, two different directions of gradient projection are used to explore the simplex, namely, the projected gradient and the reduced gradient. We choose one of the two directions according to the angle between the directions. Moreover, we propose two conditions for guessing the optimal active set heuristically. The first condition is that the working set remains unchanged for many iterations, and the second condition is that the angle between the projected gradient and the reduced gradient is small enough. Based on these strategies, a new active set algorithm for solving quadratic programs on standard simplex is proposed. 
\end{abstract}
\noindent\textbf{Keywords:} gradient projection, quadratic program, standard simplex, active-set method.\\[3pt]

\section{Introduction}
Given a function $f \colon \mathbb{R}^{n} \to \mathbb{R}^{m}$, suppose we want to minimize $f$ on a constant-sum simplex.
\begin{align}
	\underset{\alpha}\minimize\quad &f(\alpha) \\
	\text{subject to}\quad & e^\top \alpha = c, \  \alpha \geq 0 \label{eq:constraint}
\end{align}
where $e$ is an all-one vector and $c$ is a constant. When $c=1$, the constraint is a standard simplex (a.k.a. probability simplex). Intrinsically this leads to combinatorial optimization since we need to decide which elements in $\alpha$ should be $0$ and which should be greater than $0$. 
A special case is that $f$ is a quadratic function, which arises in various applications such as clustering \citep{liang2019consistency} and will be discussed in Section~\ref{sec:qp}. 
A popular iterative approach to solve the problem is the active set method \citep{wright2006} with gradient projection \citep{birgin2000nonmonotone,Cristofari2020,dai2006new,di2018two}. An active set is a set determining which elements in $\alpha$ are fixed to $0$ and which elements are free variables. With an active set, we optimize $f$ with respect to free variables without applying inequality constraints on them. A general optimization method working with active set method is gradient descent, which seeks to minimize $f$ by taking a step along the opposite direction of the gradient of $f$. In constrained optimization, however, directly taking a step with gradient may cause $\alpha$ to violate the constraints. Therefore, we need to project the gradient to a space where the constraints hold. 
Let $g=\nabla f$ be the gradient of $f$, then we want to take a step along the projected gradient $\tilde{g}$, which should be as close to $g$ as possible while the constraints \eqref{eq:constraint} are satisfied. Then we have $e^\top (\alpha-u \tilde{g}) = c, \ (\alpha-u \tilde{g}) \geq 0$, where $u > 0$ is the step size. Under the framework of active set method, some elements in $\alpha$ are fixed to $0$ while other elements are free variables that can be changed. Therefore, $\tilde{g}$ must satisfy $e^\top \tilde{g}=0$ and $\tilde{g}_{i} \leq 0$ for all $i \in G$ where $G$ is a set defined as $G = \{i \mid \alpha_{i}=0\}$.
Then we seek to project the gradient $g$ onto the hyperplane $e^\top \alpha=0$ with sign constraints on some elements of $g$, which is formulated as the gradient projection problem.
\begin{align}
	\underset{x}\minimize \quad &h(x) = \|x - g\|^2 \label{P:1} \\
	\text{subject to} \quad &e^\top x=0 \label{equality} \\
	&x_i \leq 0, \quad i \in G. \label{inequality}
\end{align}
The solution to this problem is the projected gradient $\tilde{g}$. 
Some similar projection problems where the inequalities \eqref{inequality} are imposed on all variables have been addressed by many authors \citep{duchi2008efficient,chen2011projection,wang2013projection,wang2015projection,condat2016fast}, while how to solve the problem with inequality \eqref{inequality} imposed on partial variables is not investigated. In this paper, we analyze the properties of the solution to Problem~\eqref{P:1} and present an efficient algorithm based on our analysis.

\section{Gradient Projection}
\subsection{Theoretical Properties of the Solution}
Without loss of generality, we assume that that elements in $g$ is in descending order such that $g_1 \geq g_2 \geq \dots \geq g_n$, where $n$ is the number of elements in $g$. Let $I = \{1, \dots, n\}$.
An important property of the solution to the standard simplex projection is that it preserves the order of the elements in the vector being projected \citep[Lemma~1]{duchi2008efficient}. However, when the vector being projected has sign constraints on only some of its elements, Lemma~1 in \citep{duchi2008efficient} no long holds. Instead, we have the following lemma.
\begin{lemma} \label{lemma:1}
	Let $x^*$ be the optimal solution of problem~\eqref{P:1}. Let $I^+$ and $I^-$ denote the index set of the non-negative and non-positive elements of $x^*$ respectively. If $i, j \in I^+, \ i \leq j$, we have $x_i^* \leq x_j^*$. Similarly, if $i, j \in I^-, \ i \leq j$, we have $x_i^* \leq x_j^*$. 
\end{lemma}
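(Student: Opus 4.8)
The plan is to derive a closed-form description of $x^{*}$ from the optimality conditions of \eqref{P:1} and then read the two monotonicity statements off it by a short case analysis.

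I would first observe that $h$ is strictly convex and the feasible region of \eqref{P:1} is a nonempty closed polyhedron (it contains $0$), so $x^{*}$ exists and is unique, and since all the constraints are affine the KKT conditions characterize it with no additional constraint qualification. Forming the Lagrangian $\|x-g\|^{2}+\lambda\,e^{\top}x+\sum_{i\in G}\mu_{i}x_{i}$ with $\mu_{i}\ge 0$ and setting the gradient in $x$ to zero yields $x^{*}_{i}=g_{i}-t$ for every free index $i\notin G$, where $t$ is one-half of the equality multiplier, and $x^{*}_{i}=g_{i}-t-\mu_{i}/2$ for $i\in G$. Feeding complementary slackness $\mu_{i}x^{*}_{i}=0$, dual feasibility $\mu_{i}\ge 0$ and primal feasibility $x^{*}_{i}\le 0$ into the second family collapses it to $x^{*}_{i}=\min\{0,\,g_{i}-t\}$ for $i\in G$. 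The scalar $t$ is then pinned down by $e^{\top}x^{*}=0$, but its value is irrelevant in what follows; all that matters is that it is a single number shared by every coordinate. The key consequence is that $x^{*}_{i}=g_{i}-t$ for \emph{every} index with $x^{*}_{i}\neq 0$: for $i\notin G$ this is immediate, and for $i\in G$ with $x^{*}_{i}\neq 0$ we must have $g_{i}-t<0$, so the minimum is attained by $g_{i}-t$.

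Then I would fix $i\le j$ lying in the same set ($I^{+}$ or $I^{-}$) and split on whether each of $i,j$ lies in $G$. If both $x^{*}_{i}$ and $x^{*}_{j}$ are nonzero, then $x^{*}_{i}-x^{*}_{j}=g_{i}-g_{j}$ and the sort order of $g$ gives the required comparison. The entries that equal $0$ are handled separately: an index in $G\cap I^{+}$ is forced to have $x^{*}_{i}=0$ (it is $\le 0$ by \eqref{inequality} and $\ge 0$ by membership in $I^{+}$), so one compares such a $0$ against the values $g_{k}-t$ of its neighbors, and for $I^{-}$ one argues symmetrically, again using $x^{*}_{k}=\min\{0,g_{k}-t\}$ on $G$. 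Assembling the cases gives the claimed orderings on $I^{+}$ and on $I^{-}$.

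The step I expect to be the real obstacle is this last one: the coupling between the sign of $g_{i}-t$ and membership in $G$, and the bookkeeping around coordinates pinned to $0$. In particular the naive ``swap coordinates $i$ and $j$ of $x^{*}$'' exchange argument need not stay feasible once the constraints $x_{i}\le 0$ on $G$ are active, so one has to route the argument through the closed form above (or through a carefully checked one-sided perturbation $x_{i}\mapsto x_{i}\pm\epsilon$, $x_{j}\mapsto x_{j}\mp\epsilon$) and track precisely which coordinates are $0$; the remaining verifications are routine.
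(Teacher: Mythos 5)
Your route is genuinely different from the paper's: the paper proves the $I^-$ half by an exchange argument (swap $x_i$ and $x_j$ and show the objective strictly decreases), whereas you derive the KKT closed form $x_i^*=g_i-t$ for $i\notin G$ and $x_i^*=\min\{0,\,g_i-t\}$ for $i\in G$ and read the ordering off it. That closed form is correct, and it does deliver the $I^-$ statement cleanly: any two nonzero entries differ by $g_i-g_j$, a zero entry of $I^-$ trivially dominates the entries below it, and the remaining case ($i\le j$, $x_i^*<0=x_j^*$) is impossible because $g_j-t\le g_i-t<0$ forces $x_j^*<0$. Two remarks: the inequality your computation produces is $x_i^*\ge x_j^*$ for $i\le j$ (since $g$ is sorted in descending order), which is what the paper's own proof derives and what is invoked later in the text; the ``$x_i^*\le x_j^*$'' in the statement is evidently a sign typo. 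Also, your observation that the naive swap can leave the feasible set is exactly right in general, though on $I^-$ it happens to be harmless (both swapped values are $\le 0$, so the constraints $x_k\le 0$ for $k\in G$ survive the swap), which is why the paper's argument for that half is sound.

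The genuine gap is the $I^+$ half, which you identify as ``the real obstacle'' and then assert resolves by assembling cases; it does not. A clamped index $i\in G$ with $g_i-t>0$ has $x_i^*=0$ and can sit between free indices whose values $g_k-t$ are strictly positive, so no uniform ordering holds on $I^+$. Concretely, take $g=(10,9,8,1,0)$ and $G=\{2\}$: the unique KKT point is $x^*=(5.25,\,0,\,3.25,\,-3.75,\,-4.75)$ with $t=4.75$ and $\mu_2=8.5>0$, so $I^+=\{1,2,3\}$ and $x_1^*>x_2^*<x_3^*$ --- both monotonicity directions fail. So carrying out your case analysis honestly would refute the $I^+$ claim rather than prove it. (The paper shares this defect: it declares the $I^+$ case ``similar and thus omitted,'' but that is precisely where the swap $\tilde{x}_i=x_j>0$ can violate $x_i\le 0$ for $i\in G$, and the claim is in fact false there. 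The $I^+$ half is never used downstream --- only the $I^-$ half enters the argument that $x_a^*=0$ --- so the lemma should simply be restated for $I^-$ alone, or for the strictly positive entries, where your closed form does give $x_i^*=g_i-t$ for all of them and monotonicity follows.)
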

\begin{proof}
	Suppose $x$ is the minimizer of problem \eqref{P:1} and $g_i > g_j$. Suppose $ x_i < x_j \leq 0$, i.e., $x_i, x_j \in I^-$. Switch $x_i$ and $x_j$ to get a new solution $\tilde{x}$ where $\tilde{x}_i = x_j, \tilde{x}_j = x_i$ and $\tilde{x}_k = x_k$ for $k \neq i,j$. Note that $\tilde{x}$ satisfies the constraints \eqref{equality} and \eqref{inequality}. Then
	\begin{align*}
		\Delta h(x)
		=& \|\tilde{x} - g\|^2 - \|x - g\|^2 \\
		=& (x_j - g_i)^2 + (x_i - g_j)^2 - (x_i - g_i)^2 - (x_j - g_j)^2 \\
		=& 2(x_j-x_i)(g_j-g_i) \\
		<& 0
	\end{align*}
	This contradicts that $x$ is the minimizer. Thus, we conclude $x_i \geq x_j$.
	The analysis for $x_i, x_j \in I^+$ is similar and thus omitted here.
	Note that if $g_i = g_j$, $\Delta f(x) = 0$ and thus there might be multiple optimal solutions. We \emph{choose} to adopt the optimal solution that obeys Lemma~\ref{lemma:1}.
\end{proof}
Noting that minimizing $\|x - g\|^2$ is equivalent to minimizing $x^\top x - 2x^\top g$, we construct a Lagrangian function $\displaystyle {\mathcal {L}}(x,\lambda, \mu )= x^\top x - 2x^\top g - \lambda e^\top x + \mu^\top x$, where $\mu$ is a vector defined as $\mu_i = 0$ if $i \in G^c = \{k \mid 1 \leq k \leq n, k \notin G\} $. 
KKT conditions imply
\begin{align}
	\frac{\partial \mathcal{L}}{\partial x} &= 2x-2g- \lambda e + \mu = 0 \label{E:lagr1} \\
	e^\top x &= 0 \label{E:lagr2} \\
	\mu &\geq 0 \label{E:mu} \\
	\mu_i x_i &= 0, \quad i \in G \label{E:dual}
\end{align}
Let $J$ denote the index set of the non-zero elements of $x^*$. For any $j \in J, \, x_j^* \neq 0$, by \eqref{E:dual} and the definition of $\mu$ (note that it is possible that $j \in G^c$), we have $\mu_j = 0$. By \eqref{E:lagr1}, $2x_j^* - 2g_j - \lambda = 0$. Since $\sum_{j \in J} x_j^* = 0$, $\sum_{j \in J} (2x_j^* - 2g_j - \lambda) = \sum_{j \in J} (- 2g_j - \lambda) = 0$. Let $\bar{g}_J = \sum_{j \in J} g_j / m$, i.e., the average of $g_J = \{g_j \mid j \in J\}$. Thus
\begin{equation}
	\lambda = \frac{-2 \sum_{j \in J} g_j}{m} = -2\bar{g}_J \label{E:lambda}
\end{equation} 
where $m$ is the number of elements in $J$. 
For all $i \in G \setminus J$, we have $x_i = 0$. Then by \eqref{E:lagr1} we have
\begin{equation}
	\mu_i = 2 g_i + \lambda = 2 g_i - \frac{2 \sum_{j \in J} g_j}{m} = 2(g_i - \bar{g}_J) \label{E:mu2}
\end{equation}
For all $i \in J \cup G^c$, we have $\mu_i = 0$. Then by \eqref{E:lagr1} we have
\begin{equation}
	x_i^* = g_i + \frac{\lambda}{2} = g_i - \frac{\sum_{j \in J} g_j}{m} = g_i - \bar{g}_J \label{E:x}
\end{equation}
Let $\bar{g}_I = \sum_{i \in I} g_i / n$, i.e., the average of $g_I$ (which is also $g$).
Let $x$ be defined as $x_i = g_i - \bar{g}_I$ for $i \in I$. If for $i \in G, \, x_i \leq 0$, then $x$ is the optimal solution since it is the optimal solution to the problem without inequality constraints \eqref{inequality} (this can be checked by the optimal conditions for the equality-constrained problem).

If for some $i \in G, \, x_i > 0$, let $A = \{i \in G \mid x_i > 0 \}$ and $a = \max (A)$. Note that 
\begin{equation}
	x_a = g_a - \bar{g}_I > 0 \label{E:x_a}
\end{equation}
Since $a \in G$, only one of the two situation can happen: $x_a^* = 0$ or $x_a^* < 0$. 

Suppose $x_a^* < 0$, then $a \in J$ and by \eqref{E:x} we have $ g_a - \bar{g}_J < 0$. Thus $\bar{g}_J > \bar{g}_I$. Let $F = \{i \mid 1 \leq i \leq n, \ g_i < \bar{g}_I \}$, i.e., the index set of $g_i$'s which are smaller than the average of $g_I$. $\bar{g}_J > \bar{g}_I$ implies that for some $f \in F$, $f \in J^c = I \setminus J$. Thus, $x_f^* = 0$. Note that $f, a \in I^-$. By \eqref{E:x_a} and definition of $F$, $g_f < \bar{g}_I < g_a$, by Lemma~\ref{lemma:1}, $x_f^* \leq x_a^*$. But $x_f^* = 0 > x_a^*$ causes a contradiction. Thus it is impossible that $x_a^* < 0$ and only $x_a^* = 0$ can be true. In the analysis we set $a = \max (A)$ only for ease of introducing our algorithm. In fact for all $i \in A$, $x_i^* = 0$.

Since $x_a^* = 0$, we can remove $g_a$ from $g$ and construct a reduced problem. Formally, let $I' = I \setminus \{a\}$ and $G' = G \setminus \{a\}$ be the reduced index sets. The reduced problem is
\begin{align}
\min_x \quad &\|x_{I'} - g_{I'}\|^2 \\
\text{s.t.} \quad &e^\top x_{I'}=0 \\
&x_i \leq 0, \quad i \in G' \label{inequality2}
\end{align}
Repeating the same analysis for $g_{I'}$ and $x_{I'}$. Either the inequality constraints \eqref{inequality2} are satisfied or a zero element in $x_{I'}^*$ is determined. Repeat the procedures until $x^*$ is found.

\subsection{An Gradient Projection Algorithm}
In this section, we present an algorithm for solving problem \eqref{P:1} based on our analysis.
\begin{algorithm}[H]
\begin{algorithmic}[1]
	\renewcommand{\algorithmicrequire}{\textbf{Input:}}
	\renewcommand{\algorithmicensure}{\textbf{Output:}}
	\Require $g$, $G$, $n$
	\Ensure  $x$
	\State Sort $g$ into descending order such that $g_1 \geq g_2 \geq \dots \geq g_n$. Reset the indices in $G$ to match the indices of sorted $g$. And sort $G$ into ascending order.
	\State $a = \mean(g), \ s = \summ(g)$ \Comment{the mean and sum of $g$}
	\State $m = |G|$ \Comment{number of inequality constraints}
	\State $H = \{\}$
	\For {$i= 1, \dots, m$}
	\edef\savedSixteenTwo{\the\fontdimen16\textfont2 }
	\fontdimen16\textfont2=4pt
	\If {$g_{G_i} > a$}
	\State $s \gets s - g_{G_i}$
	\fontdimen16\textfont2=\savedSixteenTwo
	\State $ a \gets s / (n - i)$
	\State $H \gets H \cup \{G_i\}$ \Comment{add $G_i$ to the index set $H$}
	\Else 
	\State Break
	\EndIf
	\EndFor
	\State $x_i \gets g_i - a$ for $i \in I \setminus H$
	\State $x_i \gets 0$ for $i \in H$
	\State Reorder the elements in $x$ to match the original order of $g$ before sorting
\end{algorithmic}
\caption{Gradient Projection onto Simplex}
\label{alg:proj grad}
\end{algorithm}

\edef\savedSixteenTwo{\the\fontdimen16\textfont2 }
\fontdimen16\textfont2=4pt
In the for loop (line 5 -- 13) in Algorithm~\ref{alg:proj grad}, since $g_{G_i} > a$ and $g_{G_i}$ is removed from the sum $s$, $a$ is decreasing during the procedure.\fontdimen16\textfont2=\savedSixteenTwo When the algorithm terminates,  $J = I \setminus H$ and $a$ equals $\bar{g}_J$.
Thus for $i \in H$, $g_i > a = \bar{g}_J$, and the KKT multiplier for $x_i^*$ is $\mu_i = 2(g_i - \bar{g}_J) > 0$, satisfying the KKT condition \eqref{E:mu}.

In Algorithm~\ref{alg:proj grad}, the computation bottleneck lies in the sorting of the input vector, and thus the time complexity is the same as that of the sorting algorithm. Many sorting algorithms has $O(n \log n)$ time complexity, and thus Algorithm~\ref{alg:proj grad} can be run efficiently in $O(n \log n)$ time.

\section{Quadratic Programs with Standard Simplex Constraints} \label{sec:qp}
Quadratic program (QP) on standard simplex is the following optimization problem.
\begin{align} \label{sub-problem 1}
\underset{\alpha}\minimize \quad & q({\alpha}) = \nicefrac{1}{2}\; {\alpha}^\top {H} {\alpha} - {\alpha}^\top {c} \\
\text{subject to} \quad &{\alpha}^\top  {1}=1, \ {\alpha} \geq 0\label{eq:alpha1}
\end{align}
where $H$ is a symmetric matrix but not necessarily positive semidefinite, and $c$ is a vector. Let $\mathcal{G}=\{{\alpha} \geq 0 \mid {\alpha}^\top {1}=1\}$ denote the feasible region. 
The active set method starts by making a guess of the optimal active set $\mathcal{A}^*$, that is, the set of constraints that are satisfied as equalities at a solution \citep{wright2006}. In our QP, the active set is $\mathcal{A}=\{i \mid \alpha_{i} = 0\}$. We call our guess of the optimal active set the working set and denote it by $\mathcal{W}$. We then solve the QP in which the constraints in the working set $\mathcal{W}$ are imposed as equalities and the constraints not in $\mathcal{W}$ are ignored. 
When combined with gradient descent, the basic procedure in the active set algorithm (ASA) is to generate a sequence of feasible points on the working set $\mathcal{W}$ until a stationary point is found. 
Various strategies for generating such points are proposed in the literature \citep{dembo1984minimization,more1991solution,dai2005projected,dai2006cyclic,hager2005conjugate,hager2006new,di2018two,Cristofari2020}. 
Starting from an initial guess of the optimal active set, we use projected gradient method to explore the feasible region $\mathcal{G}$. If the boundary of $\mathcal{G}$ is encountered, we modify $\mathcal{W}$ to include the boundary, i.e., adding a new index to $\mathcal{W}$. We will also use some strategies to remove an index from $\mathcal{W}$, following similar strategies in \citep{dembo1984minimization,hager2006new}. 
When we feel the optimal active set is identified, we switch to conjugate gradient method which has faster convergence rate than gradient descent \citep{hager2006new}. 

\subsection{Gradient Projection}
This section discusses how to explore $\mathcal{G}$ using gradient projection without violating the constraints. 
Let $ d$ denote the gradient of $q({\alpha})$ with respect to ${\alpha}$. Using gradient descent, we can obtain the next iteration by setting ${\alpha} \gets {\alpha} - u  d$ where $u > 0$ is the step size, but the new iteration may violate constraints \eqref{eq:alpha1}. To make sure that each iteration lies within the feasible region, we need to project the gradient $ d$ onto the invariant subspace with respect to constraints \eqref{eq:alpha1}. That is, after taking a step, constraints \eqref{eq:alpha1} still hold, which means the projected gradient $\tilde{ g}$ satisfies $({\alpha} - u \tilde{ g})^\top  {1}=1$ and ${\alpha} - u \tilde{ g} \geq 0$. 
Following the idea from \cite{dembo1984minimization}, we only select some elements in $ d$ to project. The binding set is defined as $\mathcal{B} = \{i \mid i \in \mathcal{W}, d_i \geq 0\}$. Then the constraints associated with $\mathcal{B}$ are the constraints whose associated Lagrange multiplier estimates have the correct sign \citep{dembo1984minimization}. 
The idea from \cite{dembo1984minimization} is to introduce two directions, the reduced gradient $ g^R$ and the projected gradient $ g^P$, defined as follows. 
\begin{gather} \label{eq:twodirection}
g_{i}^R = 
\begin{cases}
0 & \text{if $i \in \mathcal{W}$}\\
d_{i} & \text{if $i \notin \mathcal{W}$}
\end{cases}, \quad
g_{i}^R = 
\begin{cases}
0 & \text{if $i \in \mathcal{B}$}\\
d_{i} & \text{if $i \notin \mathcal{B}$}
\end{cases}.
\end{gather}
Let $ g$ denote $ g^R$ or $ g^P$, and let $\mathcal{T}$ be the corresponding set $\mathcal{W}$ or $\mathcal{B}$. The gradient projection can be formulated as the following optimization problem on standard simplex:
\begin{align}
\min_{\tilde{ g}} \quad &\|\tilde{ g} -  g\|^2 \label{P:3} \\
\text{s.t.} \quad & 1^\top \tilde{ g}=0 \label{eq:eq} \\
&\tilde{g}_i = g_i, \; i \in \mathcal{T}\\
&\tilde{g}_i \leq 0, \; i \in G \label{eq:ineq}
\end{align}
where $G = \{i \mid \alpha_{i}=0\}$.\footnote{$G$ is not necessarily the same as $\mathcal{W}$.} The gradient projection problem \eqref{P:3} can be efficiently solved by Algorithm~\ref{alg:proj grad} (note that we only need to project the elements not in $\mathcal{T}$). Let $\tilde{ g}^R$ and $\tilde{ g}^P$ be the projection of $ g^R$ and $ g^P$, respectively. In Algorithm~\ref{alg:asa}, we show how to choose between $\tilde{ g}^R$ and $\tilde{ g}^P$. 
With $\tilde{ g}^R$ or $\tilde{ g}^P$ chosen, we take a step along it and the objective is a quadratic function w.r.t. the step size $u$. Then we can easily find the optimal step size on an interval $[0, u_{max}]$ to reduce the objective (i.e.,using exact line search) and ensure $\alpha_i \geq 0$ at the same time. 

\subsection{Constrained Conjugate Gradient}
When we feel the optimal active set is identified, we would like to switch to an unconstrained optimization algorithm to accelerate convergence, since gradient projection may converge very slowly. An approach is to use conjugate gradient for acceleration \citep{hager2006new}.  
Since we are using conjugate gradient on the working set $\mathcal{W}$, and $\alpha_{i}$ in $\mathcal{W}$ must remain $0$, we can construct a new QP on the free variables (i.e., variables not in $\mathcal{W}$), on which we run conjugate gradient. Let $\tilde{{\alpha}}$ be a vector composed of the free variables of ${\alpha}$. Suppose the new QP is of the following form $\tilde{q}(\tilde{{\alpha}}) = \nicefrac{1}{2}\ \tilde{{\alpha}}^\top \tilde{ H} \tilde{{\alpha}} - \tilde{{\alpha}}^\top \tilde{ c}$ with $m$ free variables ($\tilde{{\alpha}} \in \mathbb{R}^m$). 
Then $\tilde{ H}$ is the matrix obtained from the original Hessian by taking those rows and columns whose indices correspond to free variables; similarly, $\tilde{ c}$ is obtained from $ c$ by taking the components whose indices correspond to free variables (this is because the non-free variables are 0). 

To run conjugate gradient on $\tilde{q}(\tilde{{\alpha}})$ with a linear equality constraint $\tilde{{\alpha}}^\top {1} = 1$, we use the constrained conjugate gradient method \citep{gould2001solution} as in Algorithm~\ref{alg:cg}. If its output $\tilde{ \alpha}^*$ results in a higher objective value than the input $\tilde{ \alpha}$, then $\tilde{ \alpha}^*$ is a saddle point of the quadratic function instead of a local minimizer. In this case we can restart projected gradient at $\tilde{ \alpha}$ until the working set $\mathcal{W}$ changes. 
\begin{algorithm}
	\begin{algorithmic}[1]
		\renewcommand{\algorithmicrequire}{\textbf{Input:}}
		\renewcommand{\algorithmicensure}{\textbf{Output:}}
		\Require $\tilde{ H}$ (Hessian), $\tilde{ c}$ (linear coefficient), $\tilde{ \alpha}$ (initial point)
		\Ensure  $\tilde{ \alpha}^*$
		\State $ r = \tilde{ H} \tilde{ \alpha} - \tilde{ c}$, $ g = (m r_1 - \sum_{i=1}^{m} r_i, r_2 - r_1, \dots, r_m - r_1)$, $ p = - g$
		\For {$i= 1, \dots, m$}
		\State $t =  r^\top  g$, $a = t /   p^\top \tilde{ H}  p$
		\If {$t == 0$}
		\State terminate the algorithm
		\EndIf
		\State $\tilde{ \alpha} \gets \tilde{ \alpha} + a  p$
		\State $ r \gets  r + a \tilde{ H}  p$
		\State $ g \gets  (m r_1 - \sum_{i=1}^{m} r_i, r_2 - r_1, \dots, r_m - r_1)$
		\State $u =  r^\top  g / t $
		\State $ p \gets - g + u  p$
		\EndFor
		\State $\tilde{ \alpha}^* = \tilde{ \alpha}$
	\end{algorithmic}
	\caption{Constrained Conjugate Gradient (modified Algorithm \rom{2} in \citep{gould2001solution})}
	\label{alg:cg}
\end{algorithm}

\subsection{An Active Set Algorithm}
It is important to know whether the current working set $\mathcal{W}$ is the optimal active set $\mathcal{A}^*$, otherwise we would waste computation on conjugate gradient, which will fail on non-optimal active set.  \cite{dembo1984minimization} provided insights on how to switch from gradient projection to conjugate gradient. \cite{hager2006new} further proposed an active set algorithm that exploit clever strategies to switch between projected gradient and conjugate gradient method. Based on these insights, we propose an active set algorithm (Algorithm~\ref{alg:asa}) to efficiently solve QP on standard simplex.

The reason behind line 9 in Algorithm~\ref{alg:asa} is that, if the angle between $\tilde{ g}^P$ and $\tilde{ g}^R$ is small enough (i.e., smaller than $\theta_1$), then the current working set $\mathcal{W}$ is likely to be the optimal active set \citep{hager2006new}, so we stick to $\mathcal{W}$ and adopt $\tilde{ g}^R$, which will not change $\mathcal{W}$ unless the boundary of $\mathcal{W}$ is encountered. If the angle between $\tilde{ g}^P$ and $\tilde{ g}^R$ is large, then we would like to explore the feasible set following $\tilde{ g}^P$, which is closer to the original gradient $d$ than $\tilde{ g}^R$. 

Similarly, in line 22 of Algorithm~\ref{alg:asa}, if the iterations have been in the same working set for a long time (see line 16 -- 18) and the angle between $\tilde{ g}^P$ and $\tilde{ g}^R$ is small enough, then the current working set $\mathcal{W}$ is very likely to be the optimal active set \citep{hager2006new}, so we switch to conjugate gradient method. If the guess is wrong and conjugate gradient fails, we simply switch back to gradient projection, otherwise a stationary  point is found and we terminate the algorithm.
\begin{algorithm}
	\begin{algorithmic}[1]
		\renewcommand{\algorithmicrequire}{\textbf{Input:}}
		\renewcommand{\algorithmicensure}{\textbf{Output:}}
		\Require $ H$ (Hessian), $ c$ (linear coefficient), ${\alpha}$ (initial point), $\epsilon$, $\theta_1$, $\theta_2$; default parameters are $\epsilon=10^{-8}$, $\theta_1 = \nicefrac{\pi}{18}$, $\theta_2=\nicefrac{\pi}{90}$. 
		\Ensure  ${\alpha}^*$
		\State $t=0$, $flag = 1$, $n = |\alpha|$, initialize $\mathcal{W}$ according to ${\alpha}$ \Comment{$n$ is number of variables}
		\While {not converge}
		\State $ d=H\alpha - c$ \Comment{Compute gradient of $q({\alpha})$}
		\State Compute $ g^P$ and $ g^R$ by \eqref{eq:twodirection}
		\State Use Algorithm~\ref{alg:proj grad} to project $ g^P$ and $ g^R$ to get $\tilde{ g}^R$ or $\tilde{ g}^P$
		\If {$\| \tilde{ g}^P \| < \epsilon$}
		\State terminate the algorithm
		\EndIf
		\If {$\langle \tilde{ g}^P, \tilde{ g}^R \rangle < \theta_1$} \Comment{The angle between $\tilde{ g}^P$ and $\tilde{ g}^R$ is small enough}
		\State $ p = \tilde{ g}^R$
		\Else
		\State $ p = \tilde{ g}^P$
		\EndIf
		\State ${\alpha} \gets {\alpha} - u  p$ where $u$ is determined by exact line search
		\State $t \gets t + 1$
		\If {working set $\mathcal{W}$ changes}
		\State $t \gets 0$, $flag \gets 1$
		\EndIf
		\State $ d=H\alpha - c$ \Comment{Compute gradient of $q({\alpha})$}
		\State Compute $ g^P$ and $ g^R$ by \eqref{eq:twodirection}
		\State Use Algorithm~\ref{alg:proj grad} to project $ g^P$ and $ g^R$ to get $\tilde{ g}^R$ or $\tilde{ g}^P$
		\If {$flag == 1$ \textbf{and} $ t > n $ \textbf{and} $ \langle  \tilde{g}^P,  \tilde{g}^R \rangle < \theta_2$}
		\State run Algorithm~\ref{alg:cg} on free variables $\tilde{{\alpha}}$ to get $\tilde{{\alpha}}^*$
		\If {$\tilde{q}(\tilde{{\alpha}}^*) > \tilde{q}(\tilde{{\alpha}})$ \textbf{or} $\tilde{{\alpha}}^* \notin \mathbb{R}_{\geq 0}^m$}
		\State $flag \gets 0$
		\State continue the algorithm with ${\alpha}$ (abandon $\tilde{{\alpha}}^*$)
		\Else 
		\State set the free variables on ${\alpha}$ according to $\tilde{{\alpha}}^*$ and end the algorithm with ${\alpha}$
		\EndIf
		\EndIf
		\EndWhile
	\end{algorithmic}
	\caption{Active Set Algorithm to solve QP on Simplex}
	\label{alg:asa}
\end{algorithm}

\section{Conclusion}
In this paper we propose an efficient algorithm to project the gradient of a function to hyperplane with sign constraints, which can be used to optimize a differentiable function with standard simplex constraints. The algorithm is based on the ordering properties of the elements in optimal solutions. 
Furthermore, we apply the proposed gradient projection method to quadratic programs (QP) with standard simplex constraints, where gradient projection is used to explore the feasible region and, when we believe the optimal active set is identified, constrained conjugate gradient is exploited to accelerate convergence. Specifically, two different directions of gradient projection are used to explore the simplex, namely, the projected gradient and the reduced gradient. We choose one of the two directions according to the angle between the directions. Moreover, we propose two conditions for guessing the optimal active set heuristically. The first condition is that the working set remains unchanged for many iterations, and the second condition is that the angle between the projected gradient and the reduced gradient is small enough. Based on these strategies, a new active set algorithm for solving quadratic programs on standard simplex is proposed.

\bibliography{projection.bib}

\end{document}